\documentclass[10pt,a4paper,reqno]{amsart}

\usepackage{amsmath}
\usepackage{amssymb}
\usepackage{amsthm}
\usepackage{amscd}
\usepackage{xcolor}
\usepackage{amsaddr}
\usepackage{fancyhdr}
\usepackage[draft=true]{hyperref}
\usepackage{cleveref}
\usepackage[bottom=3cm,top=3.5cm, left=3cm, right=3cm]{geometry}
\usepackage[normalem]{ulem}
\usepackage{amsfonts}
\usepackage{amsbsy}
\usepackage[utf8]{inputenc}
\usepackage[shortlabels]{enumitem}
\usepackage{cancel}
\usepackage{xspace}
\usepackage[greek,english]{babel}
\usepackage{url}
\usepackage{wrapfig}
\usepackage{enumitem}
\usepackage{multicol}
\usepackage{moreenum}
\usepackage{xcolor}
\usepackage{subcaption}
\usepackage{tikz}
\usepackage{array,multirow}
\usepackage[all,cmtip]{xy}
\definecolor{LinkColor}{rgb}{0,0,0} 

\newtheorem*{Main Theorem} {Main Theorem}
\newtheorem*{Theorem A} {Theorem A}
\newtheorem*{Theorem B} {Theorem B}
\newtheorem*{Theorem C} {Theorem C}
\newtheorem*{Conjecture A}{Conjecture A}
\newtheorem*{Conjecture B}{Conjecture B}

\newtheorem*{Question B}   {Question B}

\newtheorem {theorem}    {Theorem}[section]    
\newtheorem {lemma}      [theorem]    {Lemma}

\newtheorem {proposition}[theorem]    {Proposition}

\newtheorem {remark} 	 [theorem]    {Remark}

\numberwithin{equation}{section}

\newcounter{DM@bibnum}

\DeclareMathOperator{\Ind}{Ind}
\DeclareMathOperator{\Res}{Res}

\DeclareMathOperator{\SL}{SL}

\DeclareMathOperator{\Irr}{Irr}

\newcommand{\Syl}{\operatorname{Syl}}

\newcommand{\F}{\mathbb{F}}

\newcommand{\NN}{\mathbb{N}}
\newcommand{\ZZ}{\mathbb{Z}}
\newcommand{\QQ}{{\mathbb Q}}

\newcommand{\GEN}[1]{\left\langle #1 \right\rangle}

\newcommand{\qand}{\quad \text{and} \quad}
\newcommand{\cut}{\textsf{cut}\xspace}

\newcommand{\MM}{\mathbb{M}}

\title{A generalization of a result of Hegedüs}

\author{Sara C. Debón}
\address{Departamento de Matem\'aticas, Universidad de Murcia, Spain}
\email{sara.cebelland@um.es}

\begin{document}
	\maketitle
	
	\section{Introduction}
	Recall that a finite group $G$ is said to be \textit{rational} if for every $g \in G$ every generator of $\GEN{g}$ is conjugated to $g$. The structure of finite rational groups has been widely studied in, for instance \cite{FeitSeitz1989, IsaacNav, Kletzing1984, Thompson2008}. In particular, the main result of \cite{Hegedus2005} shows that in a finite solvable rational group, a Sylow
	$5$-subgroup is normal and elementary abelian. A key ingredient in the proof of this result is the following theorem where $\MM$ is the Frobenius group $C_{5}^2 \rtimes Q_8$ with $Q_8$ acting on $C_5^2$ as a subgroup of $\SL(2,5)$.
	
	\medskip
	\begin{theorem}\label{Theo1.1} \textup{(Theorem 1.2 of \cite{Hegedus2005})} If $H$ is a finite rational $\{2, 5\}$-group, then $P\in \Syl_5(H)$ is normal and $P\cong C_5^{2n}$ for some $n\in \NN$. Moreover, $H/O_2(H)\cong \MM\wr K$ for $K$ a $2$-subgroup of $\Sigma_n$.
	\end{theorem}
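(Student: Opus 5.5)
Since $H$ is a $\{2,5\}$-group, Burnside's $p^aq^b$ theorem makes it solvable, so I would work along a chief series. Rationality passes to quotients, and a rational group has $H/H' $ an elementary abelian $2$-group. The first goal is to show $O_5(H/O_2(H))$ is a Sylow $5$-subgroup of $H/O_2(H)$.

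\textbf{Step 1: $P$ is normal modulo $O_2$.} Put $\bar H=H/O_2(H)$, so $O_2(\bar H)=1$. Then $F(\bar H)=O_5(\bar H)=:V$ and $C_{\bar H}(V)\le V$. I would next pass to $\bar H/\Phi(V)$, which is still rational, so we may also assume $V$ is elementary abelian. Rationality forces, for every $v\in V$, the normalizer $N_{\bar H}(\langle v\rangle)$ to induce all of $\mathrm{Aut}(\langle v\rangle)\cong C_4$. Hence $\bar H/V$ acts faithfully on $V$ with every nonzero vector having a stabilizer-quotient containing an element that acts on $\langle v\rangle$ by a scalar of order $4$.

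The Sylow $5$-subgroup of $\bar H/V$ would be $O_5(\bar H/V)$-free. The key claim is that $\bar H/V$ is a $2$-group. I would argue via a chief factor $W/V$ that is a $5$-group lying above a $2$-chief factor, and then use Hall–Higman type arguments or direct counting of rational elements of order $5$: an element of order $5$ in $\bar H/V$ acting on $V$ must be conjugate to all its powers. In $GL(V)$, fixed-point considerations together with $2$-group normalizers would then give a contradiction.

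\textbf{Step 2: structure of the $2$-group action.} Now $Q\in\Syl_2(\bar H)$ acts faithfully on the elementary abelian $V$ with $C_Q(V)=1$. Each $v\ne0$ is sent to $2v$ by some element of $Q$. I would decompose $V$ into homogeneous components for a suitable normal subgroup. Irreducible $\F_5$-modules of $2$-groups are induced from modules of dimension $\le 2$, since the $4$th roots of unity lie in $\F_5$. Dimension $1$ fails because a rational group needs $v\mapsto 2v$ within a $C_4$ stabilizing the line, and an abelian action gives too few orbits. So the primitive pieces are $2$-dimensional, and the group acting on a piece, which must be transitive on the $24$ nonzero vectors, forces $Q_8\le\SL(2,5)$ with Frobenius action. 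This yields $V=V_1\oplus\cdots\oplus V_n$ with $\dim V_i=2$, and the base group contains $Q_8^n$, with $Q$ permuting the blocks via some $K\le\Sigma_n$.

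\textbf{Step 3: lifting to $H$.} The normality of $P$ in $H$ itself should follow because $O_2(H)$ centralizes nothing problematic. Equivalently, I would show the $5$-part is already normal: the preimage of $V$ is $O_2(H)P$, and rationality combined with the Frattini argument should give the elementary abelian structure of $P$ itself, removing $\Phi(V)$ by showing elements of order $25$ cannot be rational given the $2$-group automizer.

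I expect Step 1, excluding $5$-elements acting nontrivially above $V$, and the identification in Step 2 of the full $2$-group as exactly $Q_8\wr K$ rather than a subgroup of it, to be the main obstacles.
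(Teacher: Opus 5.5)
\paragraph{Gap 1: normality.} This paper does not prove the statement; it quotes it from Heged\"us \cite{Hegedus2005}. What the paper proves is Theorem A, which \emph{starts} from $V\rtimes G$ with $V$ elementary abelian and $G$ a rational $2$-group acting faithfully with the eigenvector property. Your sketch has a genuine gap in exactly the part that reaches this setting.

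Your Step~1 ``key claim'' that $\bar H/V$ is a $2$-group is the whole content of ``$P$ is normal modulo $O_2(H)$''. The phrase ``Hall--Higman type arguments or direct counting \dots\ would then give a contradiction'' supplies no mechanism. Since $O_5(\bar H/V)=1$, any $5$-element of $\bar H/V$ acts nontrivially on $O_2(\bar H/V)$, and nothing you wrote excludes this.

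Step~3 is weaker still. Normality of $P$ in $H$ is not ``equivalent'' to $P$ being elementary abelian. Since $PO_2(H)\trianglelefteq H$, it amounts to $[P,O_2(H)]=1$, which you never address. Excluding elements of order $25$ ``given the $2$-group automizer'' is also unsupported: $\operatorname{Aut}(C_{25})\cong C_{20}$, and the order-$5$ part of the automizer of $\GEN{x}$, $x\in V$, can be induced by $V$ itself.

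The paper's machinery applies only once $\bar P$ is known to be normal and elementary abelian in $\bar H=H/O_2(H)$. Then Schur--Zassenhaus gives $\bar H=\bar P\rtimes G$ with $G\cong \bar H/\bar P$ a rational $2$-group. The eigenvector property follows from rationality because $\bar P$ is abelian, and faithfulness follows from $C_{\bar H}(\bar P)\le \bar P$. Theorem A then gives $\bar H\cong \MM\wr K$.

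\paragraph{Gap 2: Step~2 fails as written.} Your Step~2 corresponds to Theorem A. First, a $2$-group cannot be transitive on the $24$ nonzero vectors of $\F_5^2$. In $\MM$ the complement $Q_8$ has three regular orbits of length $8$; the eigenvector property only asks that each line $\F_5^\times v$ lie in one orbit. Second, ``dimension $1$ fails'' is the opposite of the truth: Proposition~\ref{WG=M} shows $V=\Ind_H^G(U)$ with $\dim U=1$ and $H/C\cong C_4$.

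The $2$-dimensional blocks are not primitive pieces. They are $W=C_V(C)$ for $C$ maximal among the stabilizers $C_G(v)$, and pinning them down needs ingredients absent from your sketch:
\begin{itemize}
\item Rationality of $G$ makes a simple $V$ absolutely irreducible and self-dual. This gives a $G$-invariant non-singular form, which is symplectic by Remark~\ref{eigvecsymplectic}.
\item Lemma~\ref{nonsingular} passes the form and the eigenvector property to the fixed-point-free action of $N/C$ on $W$.
\item The form forces $\dim W_s(\alpha)=\dim W_s(\alpha^{-1})$. Hence eigenspaces of elements of order $4$ have dimension at most $\frac12\dim W$.
\item Write $\dim W=2d$ and let $n_4$ be the number of elements of order $4$ in a cyclic or generalized quaternion group. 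The count $5^{2d}-1\le n_4(5^d-1)$ then forces $N/C\cong Q_8$ and $d=1$ (Proposition~\ref{p=5}).
\item Monomiality of irreducible characters of rational $2$-groups, together with maximality of $C$, gives $V\cong \Ind_N^G(W)$.
\end{itemize}

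Finally, your claim that ``the base group contains $Q_8^n$'' is precisely the nontrivial fact that $G$ is all of $Q_8\wr K$, not just a subgroup of $Q_8\wr\Sigma_n$. The paper takes this from Proposition~3.3 of Heged\"us, and you offer no substitute.
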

	
	In fact, $K$ satisfies some technical property which we omit here.
	
	These results of Hegedüs show that every finite solvable rational group is of the form $V\rtimes G$ with $V$ an
	elementary abelian Sylow $5$-subgroup and $G$ a rational group acting on $V$ with the \textit{eigenvector property}, that is, for every $v \in V$ and $\lambda \in \F_5\setminus \{0\}$ there exists some $g \in G$ such that $vg=\lambda v$. This situation of a
	group $G$ acting on an elementary abelian $p$-group $V$ appeared several times in \cite{BKMdR}, \cite{DebonGLucasdelRio24}+\cite{DebonGLucasdelRio25}, \cite{Gow} and \cite{Hegedus2005}, without $V \rtimes G$ being rational. In this paper we show how the ideas in Sections 2 and 3 of Hegedüs’ paper that prove Theorem \ref{Theo1.1} can be adapted to this more general situation leading to the following
	result.
	
	\begin{Theorem A}\label{TheoremA}
		Let $p \geq 5$ be prime, $G$ a finite rational  $2$-group and $V$ a non-zero finite dimensional vector
		space over $\F_p$ on which $G$ acts faithfully and with the eigenvector property. Then $p = 5$ and there is a
		$2$-subgroup $K$ of the symmetric group $\Sigma_n$ such that $G\cong Q_8\wr K$, $\dim V = 2n$ and $V \rtimes G \cong \MM \wr K$.
	\end{Theorem A}
	
	We discovered Theorem A studying the Gruenberg-Kegel graphs of finite solvable \cut groups encountering situations like the ones described above where the groups involved might not be rational.
	In a work in progress, \cite{DebonGLucasdelRio26}, we will present some applications of the theorem that may help to exclude some of the graphs in Question E of \cite{BKMdR}.
	
	\section{Notation and Preliminaries}
	
	We collect here some notation and definitions used in the next sections. Moreover, we include some lemmas about non-singular bilinear forms that are necessary to prove the main theorem.
	
	Let $G$ be a finite group, $g \in G$, $X \subseteq G$ and $H \leq G$. We denote by $|G|$ the order of $G$, $|g|$ the order of $g$,
	$|G : H|$ the index of $H$ in $G$, $C_G(X)$ the centralizer of $X$ in $G$ and $N_G(H)$ the normalizer of $H$ in $G$.
	We recall that $G$ is \textit{rational} if every generator of $\GEN{g}$ is conjugated to $g$, for every $g \in G$. Also, $\times$, $\rtimes$
	and $\wr$ refer to the direct, semi-direct and wreath products, respectively.
	
	Let $p$ be a prime and $n$ a positive integer. Then $\F_{p^n}$ is the field with $p^n$ elements, $\Sigma_n$ is the
	symmetric group on $n$ letters, $Q_{2^n}$ is the generalized quaternion group of order $2^n$ and $C_n$ is the cyclic group of order $n$.
	
	All modules will be assumed to be on the right. Let $\F$ be a field, $V$ an $\F G$-module of finite dimension, $A \subseteq V$ and $W \leq V$. Then, $\dim V$ is the dimension of $V$ (over $\F$) and similarly to the group theoretical definitions of $C_G$ and $N_G$ we denote
	
	$$N_G(W) = \{g \in G : Wg = W\},$$
	$$C_G(A) = \{g \in G : ag = a \text{ for all } a \in A\}$$ 
	and
	$$C_V(X) = \{v \in V : vx = v \text{ for all } x \in X\}.$$
	
	Moreover, if $\alpha \in \F \setminus \{0\}$, then $V (g - \alpha) = \{v(g - \alpha) : v \in V \}$ and $V_g(\alpha) = \{v \in V : vg = \alpha v\}$, that is, the image and kernel of the endomorphism $(g - \alpha \cdot id)$ on $V$. We say that $G$ acts on $V$ with the
	\textit{eigenvector property} or that $V$ has the \textit{eigenvector property} if for every $v \in V$ and $\lambda \in \F \setminus \{0\}$ there
	exists some $g \in G$ with $v \in V_g(\lambda)$.
	
	If $N$ is a normal $p$-elementary abelian subgroup of $G$ then $N$ can be regarded as a $\F_pG$-module (using the action of $G$ on $N$ by conjugation).
	In particular, $N$ is also a $\F_p(G/N)$-module. Conversely, if $V$ is an $\F_pG$-module, $V \rtimes G$ refers to the
	semi-direct product associated with the action of $G$ on $V$. If $G$ acts on $V$ fixed-point-freely then $V\rtimes G$
	is Frobenius and we express this by $V\rtimes_{Fr} G$. We recall that in a Frobenius group, the Frobenius
	complement has cyclic or generalized quaternion Sylow subgroups.
	
	If $W$ is an $\F H$-module then $\Ind_H^G(W)$ refers to the induction module of $W$ to $G$, i.e., $W \otimes_{\F H} \F G$.
	If $H$ is clear from the context we just use $W^G$. On the other hand, the restriction of $V$ to $H$ will be
	denoted by $\Res_H(V)$ or $V_H$.
	
	Note that there is some connection between the concept of rationality and the eigenvector property.
	Indeed, the usual situation is that $G$ is some solvable rational group, and hence, a minimal normal $p$-subgroup of $G$ is an $\F_pG$-module having the eigenvector property.
	
	From now until the end of the section we focus our attention on bilinear forms. Let $(-,-)$ be a bilinear form on $V$. We recall that $(-,-)$ is called:
	
	\begin{itemize}
	\item \textit{non-singular}, if having $(v,w) = 0$ for every $w \in V$ implies $v = 0$ (and the same occurs if we
	interchange the order of $v$ and $w$);
	\item $G$-\textit{invariant}, if $(vg,wg) = (v,w)$ for every $v,w \in V$ and $g \in G$;
	\item \textit{alternating}, if $(v, v) = 0$ for every $v \in V$; and
	\item \textit{symplectic}, if $(-,-)$ is non-singular and alternating.
	\end{itemize}
	
	If $(-,-)$ is alternating, then it easily follows that for $v,w \in V$ then $(v,w) = 0$ if and only if $(w, v) = 0$. In that case, if $W$ is a subspace of $V$, then we denote by $W^\perp$ the orthogonal of $W$, i.e., $$W^\perp = \{v \in V : (v,w) = 0 \text{ for every } w \in W\}.$$
	Hence, for symplectic $(-,-)$, $\dim V = \dim W + \dim W^\perp$ and $(W^\perp)^\perp = W$.
	
	The idea for the next remark is taken from the proof of Theorem 1 of \cite{Gow}.
	
	\begin{remark}\label{eigvecsymplectic} \textup{Let $\F$ be a field of cardinality greater than $3$, let $G$ be a group and let $V$ be an
	$\F G$-module with the eigenvector property. Then every $G$-invariant non-singular form $(-,-)$ of $V$ is symplectic. Indeed, as $|\F| > 3$, $\F \setminus \{0\}$ contains an element $\alpha$ such that $\alpha^2\neq 1$. Let $v \in V$.
	By the eigenvector property, there is $g \in G$ such that $vg = \alpha v$. Now since the form is $G$-invariant,
	$(v, v) = (vg, vg) = (v, v)\alpha^2$ and hence $(v, v) = 0$.}
	\end{remark}
	
	Finally, Lemma \ref{nonsingular} is based on Proposition 2.3 of \cite{Hegedus2005}, which at the same time follows Step 1 in Lemma 6 of \cite{Gow}.
	
	\begin{lemma}\label{nonsingular} Let $\F$ be a field, let $G$ be a group and let $V$ be a non-zero $\F G$-module. Let $C$ be a maximal element of $\{C_G(v) : v \in V \setminus \{0\}\}$ and set $N = N_G(C)$ and $W = C_V (C)$. Then:
		\begin{enumerate}
	\item $C = C_G(W) = C_G(w)$ for every $w \in W \setminus \{0\}$,
	\item $N = N_G(W)$,
	\item If $V$ has the eigenvector property, then so does $W$ as $\F N$-module; and 
	\item If the characteristic of $\F$ does not divide the order of $G$, then the restriction to $W$ of a $G$-invariant non-singular form of $V$ is also non-singular.
	\end{enumerate}
	\end{lemma}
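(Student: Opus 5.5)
We prove the four items in order. Throughout, fix $v_0 \in V\setminus\{0\}$ with $C = C_G(v_0)$. The definition of $W$ gives $v_0 \in W$, so $W \neq 0$.

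\emph{Item (1).} Since $W = C_V(C)$, every element of $C$ fixes all of $W$, so $C \le C_G(W) \le C_G(w)$ for each $w \in W$. Now take $w \in W\setminus\{0\}$. Then $C \le C_G(w)$, and $C_G(w)$ lies in the family $\{C_G(v) : v \in V\setminus\{0\}\}$, so maximality of $C$ forces $C_G(w) = C$. Consequently $C \le C_G(W) \le C_G(w) = C$, which gives $C = C_G(W) = C_G(w)$ for every nonzero $w \in W$.

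\emph{Item (2).} For $g \in G$ we have $C_V(C^g) = Wg$. Hence if $g \in N$, then $Wg = C_V(C) = W$. Conversely, suppose $Wg = W$. Then $C_G(Wg) = g^{-1}C_G(W)g = C^g$, and by (1) applied to $Wg = W$ this equals $C$. So $g \in N$, and therefore $N = N_G(W)$.

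\emph{Item (3).} Let $w \in W\setminus\{0\}$ and $\lambda \in \F\setminus\{0\}$. By the eigenvector property of $V$ there is $g \in G$ with $wg = \lambda w$. Then
$$C_G(w)^g = C_G(wg) = C_G(\lambda w) = C_G(w).$$
By (1), $C_G(w) = C$, so $C^g = C$ and $g \in N$. The case $w = 0$ is trivial. Moreover $N$ acts on $W$ by (2), so $W$ is an $\F N$-module with the eigenvector property.

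\emph{Item (4).} This is the main step. Since $\operatorname{char}\F \nmid |G|$, hence $\nmid |C|$, Maschke's theorem applies to the module $V_C$ and yields the decomposition
$$V = C_V(C) \oplus V(1-e), \qquad e = \frac{1}{|C|}\sum_{c\in C} c,$$
where $e$ is the averaging idempotent with $Ve = W$. Put $U = V(1-e)$, which is the sum of the nontrivial isotypic components, i.e. the span of all $u(c-1)$.

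We claim $W \perp U$ in both orders. For $w \in W$, $u \in V$ and $c \in C$, $C$-invariance of the form gives
$$(w, uc) = (wc^{-1}, u) = (w, u),$$
so $(w, u(c-1)) = 0$, and similarly $(u(c-1), w) = 0$.

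Now suppose $w \in W$ satisfies $(w, W) = 0$. Since also $(w, U) = 0$, we get $(w, V) = 0$, and non-singularity on $V$ gives $w = 0$. The same argument works with the arguments interchanged. Hence the restriction of the form to $W$ is non-singular.

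The only point needing care is the decomposition $V = W \oplus U$, which is exactly where the coprimality hypothesis is used.
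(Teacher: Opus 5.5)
Your proof is correct. Items (1)--(3) follow the paper's arguments; your identities $C_V(C^g)=Wg$ and $C_G(w)^g=C_G(wg)$ are a compact form of the paper's elementwise computations with $gcg^{-1}$.

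Item (4) takes a genuinely different route. The paper proceeds as follows:
\begin{itemize}
\item It sets $W_0=l_W(W)$ and notes that $W_0$ and $r_V(W_0)$ are $\F N$-submodules. This uses item (2) and the $N$-invariance of the form.
\item It uses semisimplicity of $\F N$ to write $V=r_V(W_0)\oplus U$ with $U$ an $\F N$-submodule.
\item Via the computation $(w_0,vc)=(w_0,v)$, it shows that $C$ acts trivially on $V/r_V(W_0)\cong U$.
\item It concludes $U\subseteq C_V(C)=W\subseteq r_V(W_0)$, so $U=0$ and $W_0\subseteq l_V(V)=0$.
\end{itemize}
You instead split $V$ only as an $\F C$-module, $V=W\oplus V(1-e)$ with the averaging idempotent. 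You then show the two summands are orthogonal on both sides, so any element of the radical of the restricted form lies in the radical of the whole form.

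The key computation is the same in both. Your route is shorter and slightly more general:
\begin{itemize}
\item it needs only that $|C|$ is invertible in $\F$ and that the form is $C$-invariant;
\item it uses neither item (2) nor any $\F N$-module structure;
\item it yields the explicit orthogonal direct sum decomposition $V=W\oplus V(1-e)$, which is useful in its own right.
\end{itemize}
The paper's argument never identifies the complement explicitly. The price is requiring semisimplicity of $\F N$ rather than just invertibility of $|C|$. That costs nothing here, since $\operatorname{char}\F\nmid|G|$ is assumed.
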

	\begin{proof} (1) Note that $W\neq 0$. If $w\in W\setminus \{0\}$, then $C\subseteq  C_G(w)$, and by the maximality of $C$, the equality holds.
	Then $C_G(W) = \bigcap_{w\in W\setminus \{0\} } C_G(w)=C$.

	(2) Let $n \in N$ and $w \in W$. Then for every $c \in C$, $wncn^{-1} = w$, so that $wnc = wn$. Therefore $wn \in C_V (C)=W$. This shows that $n \in N_G(W)$. Conversely, let $g \in N_G(W)$ and let $c \in C$. Then for every $w \in W$, $wg \in W$ and therefore $wgc = wg$. Thus $gcg^{-1} \in C_G(W) = C$. This proves that $g \in N_G(C) = N$.
	
	(3) Suppose now that $V_G$ has the eigenvector property. Let $\alpha \in \F \setminus \{0\}$ and $w \in W \setminus \{0\}$. Then there is $g \in G$ such that $wg = \alpha w$. Then $wg \in W = C_V (C)$ and so $wgc = wg$ for every $c \in C$. Then $gcg^{-1} \in C_G(w) = C$. This shows that $g \in N_G(C) = N$, and hence $W$ has the eigenvector property as $\F N$-module.
	
	(4) Let $(-,-)$ be a non-singular $G$-invariant form of $V$. For $X$ and $Y$ subsets of $V$ denote $$l_X(Y) = \{x \in X : (x, y) = 0  \text{ for all } y \in Y \} \qand r_X(Y) = \{x \in X : (y, x) = 0 \text{ for all } y \in Y \}.$$
	The non-singularity of $V$ means that $l_V(V) = r_V(V) = 0$. We have to show that $l_W(W) = r_W(W) = 0$.
	By symmetry we only prove $l_W(W) = 0$. Let $W_0 = l_W(W)$. Since $(-,-)$ is $G$-invariant and $W$ is an $\F N$-submodule of $V$, then so are $W_0$ and $r_V (W_0)$. By semi-simplicity, $V = r_V (W_0)\oplus U$ for some $\F N$-submodule $U$ of $V$. Thus $U \cong V/r_V (W_0)$ as $\F N$-modules. 
	
	We claim that $C \subseteq C_N(U)$, or equivalently that $C \subseteq C_N(V/r_V(W_0))$. Indeed, let $c \in C$. As $W_0 \subseteq W = C_V (C)$, for every $w_0 \in W_0$, $w_0c^{-1} = w_0$, and hence for every $v \in V$, $(w_0, vc) = (w_0c^{-1}, v) = (w_0, v)$. Therefore $vc-v \in r_V (W_0)$ for every $v \in V$ , i.e. $c \in C_N(V/r_V (W_0))$. This finishes the proof of the claim.
	
	Since $C \subseteq C_N(U)$ we have $U + W \subseteq C_V(C) = W$, so that $U \subseteq W \subseteq r_V (l_W(W)) = r_V (W_0)$ and hence $U = 0$. Therefore, $V = r_V (W_0)$, and hence, $W_0 \subseteq l_V(V) = 0$, as desired.
	\end{proof}
	
	\section{Proof of Theorem A}
	The aim of this section is to prove Theorem A using the proof of Theorem 3.6 of \cite{Hegedus2005}. We start with a remark and two propositions which are direct
	generalizations of Proposition 2.3 and Proposition 3.1 of \cite{Hegedus2005}. Their proofs are almost exactly the same.
	
	\begin{remark}\label{degree2m-2}\textup{ 
		Let $S=Q_{2^m}$. A faithful and simple $\F_5S$-module has degree $2^{m-2}$. Indeed, write $d=2^{m-1}$, let $g\in S$ have order $d$ and let $\mathfrak{X}$ be a faithful absolutely irreducible $\F_5$-representation of $S$. By 3.1 and 3.4 of \cite{Q8degree},
		$$
		\mathfrak{X}(g)=
			\begin{pmatrix}
			\xi_{d}&0\\0&\xi_d^{-1} 
			\end{pmatrix},
		$$
		where $\xi_d$ is some root of unity of order $d$ in some extension of $\F_5$. Hence, by Theorem 9.21 of \cite{Isaacs1976} a faithful and simple $\F_5S$-module has degree $$2\cdot [\F_5(\xi_d+\xi_d^{-1}):\F_5].$$
		It can be easily proved (by induction on $m$) that $\F_5(\xi_d)=\F_5(\xi_d+\xi_d^{-1})$. Moreover, $[\F_5(\xi_d):\F_5]$ is the order of $5\mod d$. Hence, $$2\cdot [\F_5(\xi_d+\xi_d^{-1}):\F_5]=2^{m-2},$$ as desired.
	}
	\end{remark}
	
	\begin{proposition}\label{p=5}
	Let $p$ and $q$ be different prime integers with $p \geq 5$, let $S$ be a finite $q$-group and let $V$ be a finite dimensional non-zero $\F_pS$-module. Suppose that $V_{\F_pS}$ has the eigenvector property, $V$ has an $S$-invariant non-singular form and the action of $S$ on $V$ is fixed-point free. Then $V \rtimes S \cong \MM$.
	In particular, $p = 5$ and $q = 2$.
	\end{proposition}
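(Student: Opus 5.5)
The plan is to extract everything from line stabilizers. Fix $v\in V\setminus\{0\}$ and let $T_v=\{g\in S : vg\in \F_p v\}$, the stabilizer of the line $\F_p v$. The map $T_v\to\F_p^\times$, $g\mapsto\lambda$ where $vg=\lambda v$, is a homomorphism. Its kernel is $C_S(v)=1$ because the action is fixed-point free, and it is surjective by the eigenvector property. Hence $T_v\cong C_{p-1}$ for every $v\neq 0$. Since $S$ is a $q$-group and $p\ge 5$, $p-1$ is even, so $q=2$ and $p-1=2^k$ with $k\ge 2$. Moreover $V\rtimes S$ is Frobenius, so $S$ is cyclic or generalized quaternion. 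By Remark \ref{eigvecsymplectic} the invariant form is symplectic.

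\emph{Step 1: $p=5$ and $S$ is generalized quaternion.} Suppose every cyclic subgroup of order $p-1$ of $S$ is the same subgroup $T=\GEN{t}$. Then $T=T_v$ for all $v$, so $t$ fixes every line and is therefore a scalar $\mu$ of order $p-1$. Invariance of the form gives $(v,w)=(vt,wt)=\mu^2(v,w)$. Since $\mu^2\neq 1$, the form would be zero, which is a contradiction.
\begin{itemize}
\item If $S$ is cyclic, its subgroup of order $p-1$ is unique, so this case is excluded.
\item If $S\cong Q_{2^m}$, every element outside the maximal cyclic subgroup has order $4$. So when $p-1\ge 8$ the cyclic subgroup of order $p-1$ is again unique, and this case is excluded too.
\end{itemize}
Hence $S\cong Q_{2^m}$ with $m\ge 3$ and $p-1=4$, i.e.\ $p=5$.

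\emph{Step 2: counting forces $m=3$ and $\dim V=2$.} This is the step I expect to be the main obstacle. Let $n=\dim V$. Each cyclic subgroup $C=\GEN{g}$ of order $4$ satisfies $g^2=-1$, the central involution, which acts as $-1$ by fixed-point-freeness. So $V=V_g(2)\oplus V_g(3)$. Since the form is symplectic and $g$-invariant, $(V_g(\alpha),V_g(\beta))=0$ unless $\alpha\beta=1$. Thus $V_g(2)$ and $V_g(3)$ are paired nondegenerately, and each has dimension $n/2$.

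Every line $\ell$ has stabilizer exactly one cyclic subgroup $C$ of order $4$, and $\ell$ lies in $V_g(2)\cup V_g(3)$ for the generator $g$ of $C$. Conversely, every line in these eigenspaces is stabilized by $C$. Let $c$ be the number of cyclic subgroups of order $4$ in $Q_{2^m}$: $c=3$ if $m=3$ and $c=2^{m-2}+1$ if $m\ge 4$. Counting lines gives
$$\frac{5^n-1}{4}=c\cdot 2\cdot\frac{5^{n/2}-1}{4},\quad\text{i.e.}\quad 5^{n/2}+1=2c.$$
For $m=3$ this yields $n=2$.

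For $m\ge 4$ we get $5^{n/2}=2^{m-1}+1$. Every simple summand of $V$ is faithful, because the center acts as $-1$ and every nontrivial normal subgroup of $Q_{2^m}$ contains the center. So Remark \ref{degree2m-2} gives $n\ge 2^{m-2}$. Then $5^{n/2}\ge 5^{2^{m-3}}>2^{m-1}+1$, which is a contradiction.

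\emph{Step 3: identification.} So $S\cong Q_8$ acts faithfully on $V=\F_5^2$, preserving a symplectic form, hence inside $\SL(2,5)$. A subgroup $Q_8$ of $\SL(2,5)$ is a Sylow $2$-subgroup, and all of these are conjugate. Equivalently, the faithful $2$-dimensional $\F_5Q_8$-module is unique up to isomorphism. Therefore $V\rtimes S\cong C_5^2\rtimes Q_8=\MM$.
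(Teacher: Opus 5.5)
Your proof is correct. Its skeleton matches the paper's: the form is symplectic, the eigenspaces of an element of order $p-1$ are paired by it, $V\setminus\{0\}$ is partitioned according to which element or cyclic subgroup of order $p-1$ acts by a given scalar, and Remark \ref{degree2m-2} finishes. The real difference is how you eliminate the bad cases.

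The paper works for general $p$. From $V(s-\alpha)=V_{s^{-1}}(\alpha)^\perp$ it gets $\dim V=2d$ and $\dim V_s(\alpha^j)\le d$. This yields $p^{2d}-1\le n(p^d-1)$, with $n$ the number of elements of order $p-1$. It then excludes cyclic $S$ and $p-1\ge 8$ because in those cases $n=2^{k-1}<p^d+1$.

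You exclude the same cases with no counting at all. A unique cyclic subgroup of order $p-1$ would equal every line stabilizer $T_v$, so its generator would be a scalar $\mu$ with $\mu^2\neq 1$. Such a scalar preserves no nonzero bilinear form. Counting is then needed only for $p=5$, where $V=V_g(2)\oplus V_g(3)$ is explicit and you get an exact equation rather than an inequality.

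That exactness buys more than you used. Your equation $5^{n/2}+1=2c=2^{m-1}+2$ gives $5^{n/2}-1=2^{m-1}$. Now $5^a-1$ is a power of $2$ only for $a=1$: for even $a$ it is divisible by $3$, and for odd $a\ge 3$ it is $4$ times an odd number greater than $1$. So $n=2$ and $m=3$ follow at once, and Remark \ref{degree2m-2} is dispensable in your version. The paper, working only with the inequality $2^{m-1}+2\ge 5^d+1$, genuinely needs the lower bound $2d\ge 2^{m-2}$ that the remark supplies.
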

	\begin{proof}
	Let $(-,-)$ be a non-singular $S$-invariant form on $V$. By Remark \ref{eigvecsymplectic}, this form is symplectic and the properties introduced in the preliminary section hold.
	
	We claim that $V (s - \alpha) = V_{s^{-1}}(\alpha)^\perp$ for every $s \in S$ and $\alpha \in \F_p \setminus \{0\}$. Indeed, as the form is $S$-invariant, for every $u, v \in V$ we have
		$$(u(s - \alpha), v) = (u, v(s^{-1} - \alpha)).$$
	Therefore, $V (s-\alpha) \subseteq V_{s^{-1}}(\alpha)^\perp$ and if $v \in V (s -\alpha)^\perp$, then $(u, v(s^{-1} - \alpha)) = 0$ for every $u \in V$ and hence $v \in V_{s^{-1}} (\alpha)$, by the non-singularity of the form. The latter shows that $V (s - \alpha)^\perp \subseteq V_{s^{-1}} (\alpha)$ and hence $V_{s^{-1}} (\alpha)^\perp \subseteq V (s - \alpha)^{\perp \perp} = V (s - \alpha)$. This finishes the proof of the claim.
	
	If $\alpha \neq 0$, then $V_s(\alpha) = V_{s^{-1}} (\alpha^{-1}) = V (s - \alpha^{-1})^\perp$ and hence
	$$\dim V_s(\alpha) = \dim V - \dim V (s - \alpha^{-1}) = \dim V_s(\alpha^{-1}).$$
	
	Fix a generator $\alpha$ of the group of units of $\F_p$ and let $J$ denote the set of units of $\ZZ_{p-1}$. 
	Thus
	$\{\alpha^j : j \in J\}$ is the set of elements of order $p - 1$ in $\F_p$.
	
	Let $s \in S$ of order $p - 1$. By semisimplicity, $s$ is diagonalizable over $\F_p$ and, as the action of $S$ on
	$V$ is fixed-point-free, all the eigenvalues of $s$ have order $p - 1$. Hence,
	$$V = \bigoplus_{j\in J} V_s(\alpha^j).$$
	Since $p \geq 5$, if $j \in J$, then $j\not \equiv -j \mod p-1$ and hence $\alpha^j\not = \alpha^{-j}$ . However, $\dim V_s(\alpha^j) = \dim V_s(\alpha^{-j})$ and therefore $\dim V$ is even, say equal to $2d$, and $\dim V_s(\alpha^j)\leq d$ for every $j \in J$.
	
	By the eigenvector property, for every $v \in V \setminus \{0\}$ there is $s \in S$ such that $vs = v\alpha$, i.e. $v \in V_s(\alpha)$.
	Moreover, as the action is fixed-point-free there is a unique $s \in S$ satisfying this condition and $|s| =p-1$. Therefore, $V \setminus \{0\}$ is the disjoint union of the sets of the form $V_s(\alpha) \setminus \{0\}$ with $s$ running on
	the elements of order $p-1$ in $S$. Since $V_s(\alpha)$ has dimension at most $d$, it follows that if $n$ denotes the
	number of elements of $S$ of order $p-1$, then $p^{2d}-1 \leq n(p^d -1)$, and hence $n \geq p^d +1$. In particular,
	$S$ has an element of order $p-1$ and so $p-1 = q^k$ for some integer $k$. This implies that $q = 2$ and $k$ is a
	power of $2$ (by, for instance, Theorem 2.7, IX, \cite{Huppert2}), i.e. $p$ is a Fermat prime other than $3$. Hence, $p = 2^k + 1$.
	
	As $S$ acts fixed-point-freely on $V$, $S$ is either cyclic or quaternion. If $S$ is cyclic, then $n = 2^{k-1} \geq
	(2^k + 1)^d + 1$ which is not possible. Therefore $S = Q_{2^m}$ with $m\geq \max(3, k + 1)$. If $k > 2$, then the
	elements of order $2^k$ of $S$ belong to a cyclic subgroup of order $2^{m-1}$ in $S$ and again $n = 2^{k-1}$, yielding
	a contradiction. Thus $k = 2$, so $p = 5$ and the number of elements of order $4$ in $S$ is $n = 2^{m-1} + 2$.
	
	Using once more that the action of $S$ on $V$ is fixed-point-free, it follows that every simple submodule of $V_{\F_5S}$ is faithful and hence it has degree $2^{m-2}$ by Remark \ref{degree2m-2}. Therefore, if $t$ is the number of summands in the expression of $V_{\F_5S}$ as a direct sum of simple modules, then $2d = 2^{m-2}t$. Thus $2^{m-1} + 2 \geq 5^{2^{m-3}t} + 1$.
	Then $m = 3$ and $t = 1$, so $S = Q_8$ and $d = 1$. Thus $V \rtimes S \cong \MM$.
	\end{proof}
	
	The assumption $p \geq 5$ in Proposition \ref{p=5} is sharp. For example, the unique faithful linear representation of $C_2$ in characteristic $3$ provides an $\F_3C_2$-module with the eigenvector property and fixed-point-free action. The product in $\F_3$ is a non-singular $C_2$-invariant form.
	
	\begin{proposition}\label{WG=M}
		Let $p \geq 5$ prime, let $G$ be a finite rational $2$-group and let $V$ be a non-zero simple $\F_pG$-module with the eigenvector property. Then $p = 5$ and there is a subspace $W$ of $V$ such that if $N = N_G(W)$ and $C = C_G(W)$, then $W \rtimes N/C \cong \MM$ and $V\cong \Ind_N^G(W)$.
	\end{proposition}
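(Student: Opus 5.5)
Since $G$ is a $2$-group and $p$ is odd, every $\F_pG$-module is semisimple. I will apply Lemma \ref{nonsingular} and then Proposition \ref{p=5}. The plan has four steps: produce a non-singular form, reduce to $W$, apply the fixed-point-free proposition, and recover $V$ by induction.

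\textbf{Step 1: a non-singular form.} First I need a $G$-invariant non-singular form on $V$. Rationality of $G$ means every character of $G$ is rational-valued, hence real. So the Brauer character of $V$ equals its complex conjugate, and $V\cong V^*$ as $\F_pG$-modules. An isomorphism $V\to V^*$ is exactly a non-singular $G$-invariant bilinear form on $V$. By Remark \ref{eigvecsymplectic} this form is symplectic, since $p\ge 5$.

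\textbf{Step 2: reduction to $W$.} I choose $C$ maximal in $\{C_G(v): v\in V\setminus\{0\}\}$ and set $N=N_G(C)$, $W=C_V(C)$. By Lemma \ref{nonsingular}:
\begin{itemize}
\item $C=C_G(W)=C_G(w)$ for every $w\in W\setminus\{0\}$;
\item $N=N_G(W)$;
\item $W$ has the eigenvector property as an $\F_pN$-module;
\item the form restricts to a non-singular form on $W$.
\end{itemize}
Everything passes to $N/C$, which acts faithfully on $W$. The first item says every nonzero $w$ has trivial stabilizer in $N/C$, so the action is fixed-point free. Proposition \ref{p=5} applied to $S=N/C$ (a $2$-group) now gives $p=5$ and $W\rtimes N/C\cong \MM$. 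In particular $\dim W=2$, and $W$ is a simple $\F_5N$-module, since $Q_8$ acts irreducibly.

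\textbf{Step 3: $V\cong \Ind_N^G(W)$.} This is the main obstacle. I will use Mackey/Clifford-type reasoning. Let $T$ be a transversal of $N$ in $G$. The subspaces $Wt$, $t\in T$, are the fixed spaces $C_V(t^{-1}Ct)$ of distinct conjugates of $C$, since $N=N_G(C)$.

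I first show their sum is direct. Suppose $Wt\cap \sum_{t'\neq t}Wt'\ne 0$. I would use that each nonzero vector $u\in Wt$ has $C_G(u)=t^{-1}Ct$ exactly, so the sets $Wt\setminus\{0\}$ are pairwise disjoint. To upgrade disjointness to independence, I would argue via projections. Let $e_t$ be the idempotent $\frac1{|C^t|}\sum_{c\in C^t}c$, which projects $V$ onto $Wt$; this works because $|C|$ is invertible mod $5$. A cleaner route may be Frobenius reciprocity instead: the inclusion $W\hookrightarrow V_N$ gives a nonzero $G$-map $\Ind_N^G W\to V$. This map is surjective because $V$ is simple. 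I then need $\dim V\geq [G:N]\dim W$, or else that $\Ind_N^G W$ is simple.

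For simplicity of the induced module I would use Mackey's criterion: for $g\notin N$, the modules $W$ restricted to $N\cap N^g$ and $W^g$ restricted to $N\cap N^g$ share no composition factors. The natural evidence is that $C$ acts trivially on $W$ but $C^g\ne C$ acts trivially on $W^g$. However, this does not immediately separate them, because $C^g\cap N$ may act nontrivially on $W$.

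If Mackey proves awkward, I would fall back on a dimension count using the eigenvector property and the fact that every nonzero vector of $V$ lies in some $Wt$. The latter holds because maximal stabilizers can be chosen in the conjugacy class of $C$: by rationality and the eigenvector property, all the $C_G(v)$ should be conjugate to $C$. With that in hand, $V=\bigcup_t Wt$ and $V$ is spanned by the $Wt$. Combining this with the pairwise trivial intersections, $|V|-1=[G:N](25-1)$. A union of subspaces covering $V$ with pairwise trivial intersection forces $V=\bigoplus Wt$: a vector sum of vectors from two of them lies in a third, and counting then gives a spread-type structure. So the covering plus counting argument yields $\dim V=2[G:N]$ and hence the isomorphism with $\Ind_N^G W$.
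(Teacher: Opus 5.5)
Your Steps 1 and 2 are correct and coincide with the second half of the paper's proof: self-duality gives the form, then Lemma \ref{nonsingular} and Proposition \ref{p=5} are applied to $N/C$. The gap is Step 3, and it is not merely unfinished. For an arbitrary inclusion-maximal $C$, the conclusion $V\cong\Ind_N^G(W)$ is false.

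\textbf{A counterexample to Step 3.} Take $G=Q_8\wr C_2=(Q_8\times Q_8)\rtimes\GEN{\tau}$ acting on $V=W_0\oplus W_0$. Here $W_0=\F_5^2$ is the natural $Q_8$-module and $\tau$ swaps the coordinates. Then $G$ is rational, and $V$ is faithful, simple and has the eigenvector property. Since $Q_8$ is fixed-point-free on $W_0$, you get $C_G((w,w))=\GEN{\tau}$ for $w\neq 0$. Any $C_G(v)$ containing $\tau$ has $v=(y,y)$, so $\GEN{\tau}$ is a maximal element. For this $C$ you get $W=\{(y,y)\}$ and $N=C_G(\tau)=\{(q,q)\}\times\GEN{\tau}$, of order $16$ and index $8$.

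Step 2 goes through here: $N/C\cong Q_8$ and $W\rtimes N/C\cong\MM$. But $\dim\Ind_N^G(W)=16\neq 4=\dim V$. The eight subspaces $Wt$ intersect pairwise trivially yet are far from independent. So neither the directness of $\sum_t Wt$ nor the simplicity of $\Ind_N^G(W)$ can follow from Step 2 alone.

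\textbf{The fallback route.} It rests on false claims. Stabilizers need not be conjugate: here $C_G((w,0))=1\times Q_8$, while $C_G((w,w))$ has order $2$. Also, $V=\bigcup_t Wt$ cannot hold when $[G:N]>1$. Combined with your count $|V|-1=24[G:N]$ and $\dim V=2[G:N]$, it would force $25^{[G:N]}-1=24[G:N]$, i.e.\ $[G:N]=1$.

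\textbf{The missing idea.} You need to choose $C$ through a monomial realization of $V$ over $\F_p$; this is what the paper does.
\begin{enumerate}
\item Since $G$ is rational, $V$ is absolutely irreducible, and its character is $\varphi^G$ with $\varphi$ linear on some $H\leq G$ and $\QQ(\varphi)\subseteq\QQ(i)$ (Lemma 2.5 of \cite{Hegedus2005}).
\item The eigenvector property forces $4\mid p-1$, so $\varphi$ is realized by a one-dimensional $U=\F_pu$, and $V\cong\Ind_H^G(U)$.
\item Take $C=C_G(u)$. It is maximal by an index count. For any $D=C_G(v)$ with $H_1=N_G(\F_pv)$, the eigenvector property gives $[H_1:D]=p-1$. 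Frobenius reciprocity makes $V$ a summand of $\Ind_{H_1}^G(\F_pv)$. Hence $[G:D]\geq(p-1)\dim V=[G:C]$.
\item With this $C$ you have $u\in W$ and $H\leq N$ with $[N:H]=2$. Hence $W\cong\Ind_H^N(U)$, and by transitivity $V\cong\Ind_H^G(U)\cong\Ind_N^G(W)$.
\end{enumerate}
Equivalently, $C$ must be a stabilizer of maximal order, not just an inclusion-maximal one. It is monomiality over $\F_p$ that supplies the lower bound $\dim V\geq[G:N]\dim W$ you were missing.
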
 
	\begin{proof}
		As $G$ is rational, $V$ is absolutely irreducible as $\F_pG$-module. Hence, the Brauer character $\chi$ of
		$G$ afforded by $V$ is irreducible and $\chi\in \Irr(G)$. By Lemma 2.5 of \cite{Hegedus2005}, $\chi = \varphi^G$ for a linear character $\varphi$ of a subgroup $H$ of $G$ with $\QQ(\varphi) \subseteq \QQ(i)$. Moreover, by the eigenvector property, $G$ has an element of order $p-1$, so that $p-1 = 2^k$ for some $k \geq 2$. This implies that $\F_p$ has a $4$-th root of unity, and hence, $\varphi$ is the Brauer character of an absolutely irreducible $\F_pH$-module $U$, as $\varphi$ is linear and $\QQ(\varphi) \subseteq \QQ(i)$. By Frobenius reciprocity, $\varphi$ is a constituent of $\chi_H$ and hence $U$ is a one-dimensional direct summand of $V_{\F_pH}$. Applying once more Frobenius reciprocity, it follows that $V$ is a constituent of $\Ind_H^G(U)$. But
		$\dim V = [G : H] = \dim \Ind_H^G(U)$ and hence $V = \Ind_H^G(U)$.
		
		Let $U = \F_pu$ and $C = C_G(u)$ for some $u\in U$. Observe that $H = N_G(U)$. We claim that $C$ is maximal in $\{C_G(v) : v \in V \setminus \{0\}\}$. Indeed, suppose that $C \subseteq D = C_G(v)$ with $v \in V \setminus \{0\}$ and let
		$U_1 = \F_pv$ and $H_1 = N_G(U_1)$. By the eigenvector property $[H : C] = [H_1 : D] = p - 1$. On the other hand $U_1$ is a simple submodule of $V_{\F_pH_1}$. Then, by Frobenius reciprocity, $V$ is a direct summand of $V_1 = \Ind_{H_1}^G(U_1)$. Therefore,
		$$[G : C] = (p - 1)[G : H] = (p - 1) \dim V \leq (p - 1) \dim V_1 = (p - 1)[G : H_1] = [G : D]$$
		so $C = D$. This finishes the proof of the claim.
		
		As $G$ is rational, $V$ is isomorphic to its contragredient and hence $V$ has a non-singular $G$-invariant
		form by Lemma 4 of \cite{Gow}. Let $W = C_V (C)$ and $N = N_G(W)$. By Lemma \ref{nonsingular}, the restriction of this
		form to $W$ is $N$-invariant and non-singular. Moreover, $W$ has the eigenvector property as $\F_pN$-module
		and hence it also has the eigenvector property as $\F_p(N/C)$-module. Also, $C = C_G(w)$ for
		every $w \in W \setminus \{0\}$, so that the action of $N/C$ on $W$ is fixed-point-free. By Proposition \ref{p=5}, $p = 5$,
		$N/C \cong Q_8$ and $W \rtimes N/C \cong \MM$. Then $W$ is the unique up-to-isomorphism simple $\F_5(N/C)$-module of
		dimension $2$, $H/C$ is a cyclic subgroup of $N/C$ of order $4$ and $U_{H/C}$ is a one dimensional summand of $W_{H/C}$. Then $W \cong \Ind_H^N(U)$ and hence $V \cong \Ind_H^G(U) = \Ind_N^G(W)$.
	\end{proof}
	
	Before we continue with the proof of the main theorem, let us recall something about module inductions.
	
	\begin{remark}\label{induction} \textup{Suppose that $V = \Ind_H^G(W)$ for $H$ a subgroup of $G$ and $W$ an $\F H$-module. Let $C = C_H(W)$ and let $\{t_1,\cdots, t_n\}$ be a right transversal of $H$ in $G$. Then every $g \in G$ defines a permutation $\sigma_g \in \Sigma_n$ so that $\sigma_g(i)$ is uniquely determined by $t_ig \in Ht_{\sigma_g(i)}$. Then $$g \longmapsto (t_igt^{-1}_{\sigma_g(i)}C)_{i=1}^n \sigma_g$$
	is a group homomorphism from $G$ to $(H/C) \wr \Sigma_n$ whose kernel is the core of $C$ in $G$, which coincides
	with the kernel of the action of $G$ on $V$. Therefore, if $V_G$ is faithful, then $G$ is isomorphic to a subgroup of $(H/C) \wr \Sigma_n$.}\end{remark}
	
	\begin{proof}[Proof of Theorem A] Suppose first that $V$ is simple. In that case, by Proposition \ref{WG=M}, $p = 5$ and $V = \Ind^G_N(W)$ with $W$ a subspace of $V$ such that $N = N_G(W)$ and if $C = C_G(W)$, then $N/C \cong Q_8$ and
	$W \rtimes N/C \cong \MM$. By Remark \ref{induction}, $G$ is isomorphic to a subgroup of $Q_8 \wr \Sigma_n$ and $2n = \dim V$, where
	$n = [G : N]$. As $G$ is a $2$-group we also have that $G$ is isomorphic to a subgroup of $Q_8 \wr S$ for a Sylow
	$2$-subgroup $S$ of $\Sigma_n$. Moreover, $V_G$ has the eigenvector property and hence, by Proposition 3.3 of \cite{Hegedus2005}, $G$
	is itself a wreath product $Q_8 \wr K$ for some $2$-subgroup $K$ of $\Sigma_n$. Now, using that $V = \Ind^G_N(W)$, it easily follows that $V \rtimes G \cong \MM\wr K$.
	
	For the general case, write $V =\bigoplus_{i=1}^m V_i$ with each $V_i$ a simple $\F_pG$-module. For every $i \in  \{1,\dots,m\}$,
	let $C_i = C_G(V_i)$. Then $V_i$ is a simple faithful $\F_p(G/C_i)$-module with the eigenvector property and
	hence, by the previous paragraph, $p = 5$, $G/C_i \cong Q_8 \wr K_i$ for some $2$-subgroup $K_i$ of $\Sigma_{n_i}$ with $2n_i = \dim V_i$ and $V_i\rtimes (G/C_i) \cong \MM\wr K_i$. Let $n = n_1+\cdots +n_m$. Then $\dim V = 2n$ and considering $K_1 \times \cdots \times K_m$ as a subgroup of $\Sigma_n$, via the natural embedding of $\Sigma_{n_1} \times \cdots \times \Sigma_{n_m}$ in $\Sigma_n$, we have
	$$\prod_{i=1}^m(V_i \rtimes G/C_i) \cong \prod_{i=1}^m \MM\wr K_i \cong \MM^n \rtimes \prod_{i=1}^m K_i \cong \MM \wr \prod_{i=1}^m K_i.$$

	Let $\alpha$ be this composition of isomorphisms from left to right. Then $\alpha(V) \cong C_{5}^{2n}$ is the unique Sylow
	$5$-subgroup of the image of $\alpha$, and $\alpha(\prod_{i=1}^m G/C_i)= Q_8 \wr \prod_{i=1}^m K_i$. Since $V_G$ is faithful, the natural
	map $\beta: G \longrightarrow \prod_{i=1}^m G/C_i$ is injective so that $G\cong \alpha \beta(G)\subseteq Q_8 \wr \prod_{i=1}^m K_i$. Applying  Proposition 3.3 of \cite{Hegedus2005} once more, we deduce that $G \cong \alpha \beta(G) = Q_8 \wr K$ for some $2$-subgroup $K$ of $\Sigma_n$. Thus $V \rtimes G \cong \alpha(V) \rtimes \alpha \beta(G) \cong \MM\wr K$.
	\end{proof}
\medskip
\textbf{Acknowledgment:} Theorem A came across in the middle of a different research about the prime graphs of solvable \cut groups \cite{DebonGLucasdelRio26}. The author is grateful to Ángel del Río and Diego García-Lucas for their encouragement to publish the result independently. Also for their useful feedback, comments
and support during the preparation of this paper. Moreover, the research is partially supported by grants PID2024-155576NB-I00 and 22004/PI/22 funded by MICIU/AEI/ 10.13039/501100011033 /FEDER, UE and Fundación Séneca, respectively.

\bibliographystyle{plain}
\bibliography{References}

\end{document}